\numberwithin{equation}{section}
\newtheorem{theorem}[equation]{Theorem}
\newtheorem{lemma}[equation]{Lemma}
\theoremstyle{definition}
\newtheorem{rmk}[equation]{Remark}
\newenvironment{remark}[1][]{\begin{rmk}[#1] \pushQED{\qed}}{\popQED \end{rmk}}
\newtheorem{eg}[equation]{Example}
\newtheorem{defn}[equation]{Definition}
\newcommand{\rB}{\mathrm{B}}
\newcommand{\bC}{\mathbf{C}}
\newcommand{\rC}{\mathrm{C}}
\newcommand{\rD}{\mathrm{D}}
\newcommand{\bH}{\mathbf{H}}
\newcommand{\bO}{\mathbf{O}}
\newcommand{\bS}{\mathbf{S}}
\newcommand{\rU}{\mathrm{U}}
\newcommand{\rX}{\mathrm{X}}
\newcommand{\bZ}{\mathbf{Z}}
\newcommand{\fg}{\mathfrak{g}}
\newcommand{\bh}{\mathbf{h}}
\renewcommand{\phi}{\varphi}
\newcommand{\arxiv}[1]{\href{http://arxiv.org/abs/#1}{{\tt arXiv:#1}}}
\def\Ddots{\mathinner{\mkern1mu\raise\p@
\vbox{\kern7\p@\hbox{.}}\mkern2mu
\raise4\p@\hbox{.}\mkern2mu\raise7\p@\hbox{.}\mkern1mu}}
\renewcommand{\hom}{\operatorname{Hom}}
\DeclareMathOperator{\rank}{rank}
\DeclareMathOperator{\Sym}{Sym}
\newcommand{\Sp}{\mathbf{Sp}}
\newcommand{\fso}{\mathfrak{so}}
\tikzset{node distance=2em, ch/.style={circle,draw,on chain,inner sep=2pt},chj/.style={ch,join},every path/.style={shorten >=4pt,shorten <=4pt},line width=1pt,baseline=-1ex}
\newcommand{\mlabel}[1]{ \footnotesize \(#1\) }
\newcommand{\dnode}[2][chj]{ \node[#1,label={below:\mlabel{#2}}] {}; }
\newcommand{\dnodenj}[1]{ \dnode[ch]{#1} }
\newcommand{\dnodebr}[1]{ \node[chj,label={below right:\mlabel{#1}}] {}; }
\newcommand{\dydots}{ \node[chj,draw=none,inner sep=1pt] {\dots}; }
\title[Jacobi--Trudi determinants and characters of minimal affinizations]{Jacobi--Trudi determinants and\\ characters of minimal affinizations}
\author{Steven V Sam}
\address{Department of Mathematics, University of California, Berkeley, CA}
\email{svs@math.berkeley.edu}
\urladdr{\url{http://math.berkeley.edu/~svs/}}
\thanks{The author was supported by a Miller research fellowship.}
\date{September 3, 2014}
\subjclass[2010]{05E05, % Symmetric functions and generalizations
17B10% Representations, algebraic theory (weights)
}
\begin{document}

\maketitle

\begin{abstract}
In their study of characters of minimal affinizations of representations of orthogonal and symplectic Lie algebras, Chari and Greenstein conjectured that certain Jacobi--Trudi determinants satisfy an alternating sum formula. In this note, we prove their conjecture and slightly more. The proof relies on some symmetries of the ring of symmetric functions discovered by Koike and Terada. Using results of Hernandez, Mukhin--Young, and Naoi, this implies that the characters of minimal affinizations in types B, C, and D are given by a Jacobi--Trudi determinant.
\end{abstract}

\section*{Introduction}

In \cite{minaff}, Chari and Greenstein study a class of modules over the current algebra $\fg \otimes \bC[t]$ where $\fg$ is either a special orthogonal or symplectic Lie algebra (over the complex numbers). These modules are related to the minimal affinizations, a class of irreducible representations for the quantum loop algebra $\rU_q(\fg \otimes \bC[t,t^{-1}])$. We refer the reader to \cite[\S 3]{minaff} for background and references. A character formula, which is similar to a Jacobi--Trudi determinant, for these modules is conjectured in \cite[Conjecture 1.13]{minaff}. This is inspired by \cite{NN1} which conjectures that the characters of minimal affinizations are given by such determinants (see also \cite{NN2,NN3} for related work).

The aim of this note is to prove \cite[Conjecture 1.13]{minaff} (see Theorem~\ref{thm:main}). We will give a uniform proof for all types. The conjecture reduces to a combinatorial statement about characters of $\fg$, so we will not need to discuss current or loop algebras any further. In fact, we will prove an extension of the combinatorial statement which removes a restriction on the highest weights considered. Furthermore, using results of Hernandez, Mukhin--Young, and Naoi, this gives a character formula for minimal affinizations of representations of $\fg$ in types B, C, and D (see Remark~\ref{rmk:minaff}).

The method of proof involves passing to a suitable limit (with respect to the rank of the Lie algebra) to take advantage of additional symmetries. This suggests that there should be a connection to the categories ${\rm Rep}(\bO)$ and ${\rm Rep}(\Sp)$ studied in \cite[\S 4]{infrank} and a suitable categorification of the involutions $i_\bO$ and $i_\Sp$ used in \S\ref{sec:proof} (which were introduced by Koike and Terada in \cite{koiketerada}), but we have been unable to find one so far.

\section{Notation}

We need some basic terminology of partitions \cite[\S I.1]{macdonald}. A partition $\lambda$ is a sequence of integers $(\lambda_1, \dots, \lambda_r)$ with $\lambda_1 \ge \cdots \ge \lambda_r \ge 0$. We set $|\lambda| = \sum_i \lambda_i$ and $\ell(\lambda) = \max\{i \mid \lambda_i \ne 0\}$. We write $\mu \subseteq \lambda$ if $\mu_i \le \lambda_i$ for all $i$ and also say that $\lambda$ contains $\mu$. The notation $a^b$ means the sequence $(a, a, \dots, a)$ where $a$ appears $b$ times. We use $\lambda^\dagger$ to denote the transpose partition of $\lambda$, i.e., $\lambda^\dagger_i = \#\{j \mid \lambda_j \ge i\}$ (in terms of Young diagrams, we are flipping across the diagonal). Let $\bS_\lambda$ denote the corresponding Schur functor \cite[\S 6.1]{fultonharris}; for the purposes of this note, $\bS_\lambda$ is a functor from the category of complex vector spaces to itself. Special cases are symmetric powers $\bS_k = \Sym^k$ and exterior powers $\bS_{(1^k)} = \bigwedge^k$. We use $s_\lambda$ to denote the Schur function indexed by $\lambda$ \cite[\S I.3]{macdonald} (it is the character of $\bS_\lambda$). The product of two Schur functions is a linear combination of Schur functions: 
\[
s_\mu s_\nu = \sum_\lambda c^\lambda_{\mu, \nu} s_\lambda.
\]
The $c^\lambda_{\mu, \nu}$ are the Littlewood--Richardson coefficients \cite[\S I.9]{macdonald}. If $c^\lambda_{\mu, \nu} \ne 0$, then $|\lambda| = |\mu| + |\nu|$ and also $\mu \subseteq \lambda$ and $\nu \subseteq \lambda$.

Let $G$ be a complex classical group of type $\rB_n$, $\rC_n$, or $\rD_{n+1}$, i.e., $G$ is either $\bO_{2n+1}(\bC)$, $\Sp_{2n}(\bC)$, or $\bO_{2n+2}(\bC)$, respectively. Let $\fg$ be the Lie algebra of $G$. Let $\rank(\fg)$ be the rank of $\fg$, i.e., it is $n$ in the cases of type B and C, and it is $n+1$ in the case of type D. We use these groups rather than their Lie algebras to avoid having to make technical remarks later. For the representations considered in \cite{minaff}, this choice will not be important. We number the nodes of the Dynkin diagram according to Bourbaki notation:
\begin{align*}
\rB_n: &&
\begin{tikzpicture}[start chain]
\dnode{1}
\dnode{2}
\dydots
\dnode{n-2} 
\dnode{n-1}
\dnodenj{n}
\path (chain-5) -- node[anchor=mid] {\(\Rightarrow\)} (chain-6);
\end{tikzpicture}
\\%
\rC_n: &&
\begin{tikzpicture}[start chain]
\dnode{1}
\dnode{2}
\dydots
\dnode{n-2}
\dnode{n-1}
\dnodenj{n}
\path (chain-5) -- node[anchor=mid] {\(\Leftarrow\)} (chain-6);
\end{tikzpicture}
\\%
\rD_{n+1}: &&
\begin{tikzpicture}
\begin{scope}[start chain]
\dnode{1}
\dnode{2}
\node[chj,draw=none] {\dots};
\dnode{n-2}
\dnode{n-1}
\dnode{n}
\end{scope}
\begin{scope}[start chain=br going above]
\chainin(chain-5);
\dnodebr{n+1}
\end{scope}
\end{tikzpicture}
\end{align*}
Let $\omega_i$ be the fundamental weights, and let $\lambda$ be a dominant integral weight which is a linear combination of $\omega_1, \dots, \omega_{n-1}$ (so in particular, we avoid the spin representations in the orthogonal case). We will use a basis $e_1, \dots, e_{\rank(\fg)}$ for the weight lattice of $G$ (see \cite[\S\S 16.1, 18.1]{fultonharris} for details; there the basis is denoted $L_1, \dots, L_{\rank(\fg)}$). Given $\lambda = a_1 \omega_1 + \cdots + a_{n-1} \omega_{n-1}$, we associate to it the partition 
\[
(a_1 + \cdots + a_{n-1}, a_2 + \cdots + a_{n-1}, \dots, a_{n-1}).
\]
So in particular, the notation $\lambda_i = a_i + \cdots + a_{n-1}$ is defined. Then we have $\lambda = \lambda_1 e_1 + \cdots + \lambda_{n-1} e_{n-1}$. Let $V_\lambda$ be the corresponding highest weight representation of $G$. We will denote $V = V_1$, the vector representation. We sometimes use the notation $V_\lambda^\bO$ or $V_\lambda^\Sp$ to emphasize that we are dealing with the  orthogonal or symplectic case, respectively. 

In general, all finite-dimensional irreducible representations $V_\lambda$ of $G$ can be indexed by partitions $\lambda$ (see \cite[\S\S 17.3, 19.5]{fultonharris} or \cite[\S 4.1]{infrank}). We may assume that $\ell(\lambda) \le \rank(\fg)$ as long as we are ambivalent about the presence of the sign representation in the orthogonal group case. (The reason we do not use the special orthogonal group is because some irreducible representations of the even orthogonal group are not irreducible when restricted to the special orthogonal group, and so the latter group does not behave as well from the perspective of stability.)

Now we rephrase the definitions in \cite[\S 1.13]{minaff} in this notation. First, we have $i_\lambda = \ell(\lambda)$. In the orthogonal case, $\Psi_\lambda = \{ e_i + e_j \mid 1 \le i < j \le \ell(\lambda)\}$, and in the symplectic case, $\Psi_\lambda = \{ e_i + e_j \mid 1 \le i \le j \le \ell(\lambda)\}$. Define the set 
\[
\Gamma(\lambda, \Psi_\lambda) = \{ (\mu,s) \mid \lambda = \mu + \sum_{\beta \in \Psi_\lambda} n_\beta \beta,\ n_\beta \in \bZ_{\ge 0},\ \sum_{\beta \in \Psi_\lambda} n_\beta = s\}.
\]
By the definitions of $\Psi_\lambda$, we see that $(\mu, s) \in \Gamma(\lambda, \Psi_\lambda)$ implies that $s = (|\lambda|-|\mu|)/2$.

Define $\bh_k = {\rm char}(V_k^\bO)$ in the orthogonal case and $\bh_k = \sum_{0 \le r \le k/2} {\rm char}(V_{k-2r}^\Sp)$ in the symplectic case. In both cases, define the Jacobi--Trudi determinant
\[
\bH_\lambda = \det(\bh_{\lambda_i - i + j}).
\]
For $(\nu, s) \in \Gamma(\lambda, \Psi_\lambda)$, define 
\begin{align*}
C^\lambda_{\nu, s} = \dim \hom_G(V_\nu, \bigwedge^s(\fg) \otimes V_\lambda)
\end{align*}
(see \cite[\S 2.7]{minaff}, but there it is $c$ instead of $C$; we use $c$ for Littlewood--Richardson coefficients).

All of the above definitions make sense for any partition $\lambda$ with $\ell(\lambda) \le \rank(\fg)$. To make this clear, we spell out the conversion between partitions and weights now. Let $r = \rank(\fg)$ and let $\lambda = (\lambda_1, \dots, \lambda_r)$ be a partition.
\begin{compactitem}
\item If $G = \Sp_{2r}(\bC)$, then $V_\lambda$ is irreducible with highest weight $\sum_{i=1}^{r-1} (\lambda_i - \lambda_{i+1}) \omega_i + \lambda_r \omega_r$.
\item If $G = \bO_{2r+1}(\bC)$, then $V_\lambda$ is irreducible with highest weight $\sum_{i=1}^{r-1} (\lambda_i - \lambda_{i+1}) \omega_i + 2\lambda_r \omega_r$.
\item If $G = \bO_{2r}(\bC)$, then there are two cases. In both cases, $V_\lambda$ is an irreducible representation of $\bO_{2r}(\bC)$, but we distinguish between what happens when we pass to the Lie algebra $\fso_{2r}(\bC)$.
\begin{compactitem}
\item If $\lambda_r=0$, then $V_\lambda$ is an irreducible representation of $\fso_{2r}(\bC)$ with highest weight
$\sum_{i=1}^{r-2} (\lambda_i - \lambda_{i+1}) \omega_i + \lambda_{r-1} (\omega_{r-1} + \omega_r)$.
\item If $\lambda_r>0$, then as a representation of $\fso_{2r}(\bC)$, $V_\lambda$ is the direct sum of irreducible representations with highest weights $\sum_{i=1}^{r-2} (\lambda_i - \lambda_{i+1}) \omega_i + (\lambda_{r-1} - \lambda_r) \omega_{r-1} + (\lambda_{r-1} + \lambda_r)\omega_r$ and $\sum_{i=1}^{r-2} (\lambda_i - \lambda_{i+1}) \omega_i + (\lambda_{r-1} + \lambda_r) \omega_{r-1} + (\lambda_{r-1} - \lambda_r)\omega_r$.
\end{compactitem}
\end{compactitem}

In the orthogonal case, let $d^\lambda_\nu$ be the multiplicity of $V_\nu^{\Sp}$ in $\bS_\lambda(V^\Sp)$: here $V^\Sp$ is the vector representation for $\Sp(2n)$ with $n \ge \ell(\lambda)$ and $\bS_\lambda(V^\Sp)$ is considered as a representation of $\Sp(2n)$). By \cite[Proposition 1.5.3]{koiketerada}, this multiplicity is independent of $n$ as long as $n \ge \ell(\lambda)$, and we have
\[
d^\lambda_\nu = \sum_\eta c^\lambda_{\nu, (2\eta)^\dagger}.
\]

Similarly, in the symplectic case, let $d^\lambda_\nu$ be the multiplicity of $V_\nu^\bO$ in $\bS_\lambda(V^\bO)$ (note that we are using branching rules for the {\it other} group in both cases). Then we have
\[
d^\lambda_\nu = \sum_\eta c^\lambda_{\nu, 2\eta}.
\]

We can now formulate our main result. When $\ell(\lambda) \le n-1$, this proves \cite[Conjecture 1.13]{minaff}.

\begin{theorem} \label{thm:main}
Let $\lambda$ be a partition with $\ell(\lambda) \le \rank(\fg)$. Then 
\begin{align} \label{eqn:conj}
\sum_{(\nu, s) \in \Gamma(\lambda, \Psi_\lambda)} (-1)^s C^\lambda_{\nu, s} \bH_\nu = {\rm char}(V_\lambda).
\end{align}
Also $\bH_\lambda = \sum_\nu d^\lambda_\nu {\rm char}(V_\nu)$.
\end{theorem}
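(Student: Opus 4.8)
The plan is to read both assertions as a pair of mutually inverse changes of basis in the ring of symmetric functions, and to isolate the single genuinely representation-theoretic input. First I pass to the stable limit: since the numbers $c^\cdot_{\cdot,\cdot}$, $d^\lambda_\nu$ and $C^\lambda_{\nu,s}$ are independent of $\rank(\fg)$ once $\rank(\fg)\ge\ell(\lambda)$ (the stability underlying \cite{koiketerada}, and the reason for using $\bO$ rather than $\SO$), while ${\rm char}(V_\nu)$, $\bh_k$ and $\bH_\nu$ are all specializations of universal characters, it suffices to prove the two identities in $\Lambda$ with ${\rm char}(V_\nu)$ replaced by the universal orthogonal (resp.\ symplectic) character $\chi_\nu$. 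I work out the orthogonal case and write $\Psi=\{e_i+e_j\mid i<j\}$ and $P=\prod_{i<j}(1-x_ix_j)$; the symplectic case is identical with $(2\eta)^\dagger$ replaced by $2\eta$, $i<j$ by $i\le j$, and $P=\prod_{i\le j}(1-x_ix_j)$.

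The ``Also'' statement $\bH_\lambda=\sum_\nu d^\lambda_\nu\chi_\nu$ I obtain by expanding $\bH_\lambda=\det(\bh_{\lambda_i-i+j})$ via Cauchy--Binet: the series $\sum_k\bh_kt^k$ equals $(1-t^2)^{\epsilon}\sum_kh_kt^k$ with $\epsilon=1$ orthogonally and $\epsilon=-1$ symplectically, so the Toeplitz matrix of the $\bh_k$ factors as the Toeplitz matrix of $(1-t^2)^{\epsilon}$ times that of the $h_k$; the minors of the latter are Schur functions, those of the former produce the even-column (resp.\ even-row) Littlewood--Richardson sums defining $d^\lambda_\nu$, and re-expanding through the Koike--Terada branching rule assembles $\sum_\nu d^\lambda_\nu\chi_\nu$. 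Granting this, substitute $\bH_\nu=\sum_\mu d^\nu_\mu\chi_\mu$ into the left side of \eqref{eqn:conj}; as the $\chi_\mu$ form a basis, \eqref{eqn:conj} becomes the scalar statement $\sum_{(\nu,s)\in\Gamma(\lambda,\Psi_\lambda)}(-1)^sC^\lambda_{\nu,s}\,d^\nu_\mu=\delta_{\lambda\mu}$, i.e.\ that the matrix $A_{\lambda\nu}=\sum_s(-1)^sC^\lambda_{\nu,s}$ (sum over the unique $s$ with $(\nu,s)\in\Gamma$) is inverse to the branching matrix $D=(d^\nu_\mu)$. Now $d^\nu_\mu=\sum_\zeta c^\nu_{\mu,(2\zeta)^\dagger}$ is precisely the coefficient of $s_\nu$ in $s_\mu\cdot P^{-1}$, so $D$ is the matrix of multiplication by $P^{-1}$ in the Schur basis; its inverse is multiplication by $P$, whose entry is the coefficient of $s_\lambda$ in $s_\nu\cdot P=\sum_\eta(-1)^{|\eta|}c^\lambda_{\nu,(2\eta)^\dagger}s_\lambda$ (using the Littlewood identity $P=\sum_\eta(-1)^{|\eta|}s_{(2\eta)^\dagger}$, so that $P\cdot P^{-1}=1$ collapses the product). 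Hence it suffices to prove the single identity
\[
\sum_{s:(\nu,s)\in\Gamma(\lambda,\Psi_\lambda)}(-1)^sC^\lambda_{\nu,s}=\sum_\eta(-1)^{|\eta|}c^\lambda_{\nu,(2\eta)^\dagger},
\]
for then $A$ is exactly the matrix of multiplication by $P$, hence $D^{-1}$, and the substitution collapses to $\chi_\lambda={\rm char}(V_\lambda)$.

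To prove this last identity I use the Siegel parabolic. Write $\fg=\fu^-\oplus\fgl_n\oplus\fu$ for the grading whose nilradical $\fu$ is abelian with weights $\Psi$, so $\fu\cong\bigwedge^2\bC^n$ and $\fu^-\cong\fu^*$, and recall the classical plethysm $\bigwedge^s\fu^-\cong\bigoplus_{|\eta|=s}\bS_{(2\eta)^\dagger}(\bC^n)^*$ as $\fgl_n$-modules. For $(\nu,s)\in\Gamma(\lambda,\Psi_\lambda)$, where by definition $\lambda-\nu$ is a sum of exactly $s$ elements of $\Psi$ and hence $|\lambda|-|\nu|=2s$, the point is that every copy of $V_\nu$ in $\bigwedge^s\fg\otimes V_\lambda$ is accounted for by the single summand $\bigwedge^s\fu^-\otimes\bS_\lambda\bC^n$, where $\bS_\lambda\bC^n$ is the top $\fgl_n$-type of $V_\lambda$: choosing the $\bigwedge^s\fg$-factors from $\fu^-,\fgl_n,\fu$ in numbers $a,b,c$ changes the $\fgl_n$-size by $2(c-a)$, and realizing the total size drop $2s$ with $a+b+c=s$ forces $a=s$, $b=c=0$. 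This gives $C^\lambda_{\nu,s}=\dim\hom_{\fgl_n}(\bS_\nu\bC^n,\bigwedge^s\fu^-\otimes\bS_\lambda\bC^n)=\sum_{|\eta|=s}c^\lambda_{\nu,(2\eta)^\dagger}$, and inserting $(-1)^s$ and summing over $s$ yields the displayed identity, completing \eqref{eqn:conj}.

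The step I expect to be the main obstacle is making this last reduction rigorous: the decomposition $\fg=\fu^-\oplus\fgl_n\oplus\fu$ is only Levi-equivariant, so passing from the $G$-multiplicity $C^\lambda_{\nu,s}$ to a $\fgl_n$-multiplicity requires more than the size count — one must check that a $G$-highest weight vector of the extremal weight $\nu$ must live in the bottom layer $\bigwedge^s\fu^-\otimes\bS_\lambda\bC^n$ and be annihilated by $\fu$. I would establish this by filtering $\bigwedge^s\fg\otimes V_\lambda$ by the grading element $Z=\frac12\sum_i e_i^*$ and comparing with the internal $Z$-grading of $V_\lambda$ (whose $\fgl_n$-top is $\bS_\lambda\bC^n$), the extremality of $\nu$ within $\Gamma$ forcing concentration in the lowest layer; alternatively this extremal-multiplicity computation may be extractable from the branching analysis in \cite[\S2.7]{minaff}. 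Once the displayed identity is in hand, the matrix computation above proves \eqref{eqn:conj}, and specializing the universal-character identities back to any $\rank(\fg)\ge\ell(\lambda)$ establishes both assertions of the theorem.
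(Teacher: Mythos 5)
Your reduction collapses at the displayed identity $\sum_s(-1)^sC^\lambda_{\nu,s}=\sum_\eta(-1)^{|\eta|}c^\lambda_{\nu,(2\eta)^\dagger}$, which is false; the error is a single confusion appearing twice in mutually compensating ways, namely exchanging $\Sym^s$ and $\bigwedge^s$ in plethysms with $\bigwedge^2$. The decomposition $\bigoplus_{|\eta|=s}\bS_{(2\eta)^\dagger}(E)$ is $\Sym^s(\bigwedge^2E)$, not $\bigwedge^s(\bigwedge^2E)$; the latter is $\bigoplus_{\mu\in Q_{-1},\,|\mu|=2s}\bS_\mu(E)$ as in \eqref{eqn:wedge-plethysm} (compare $\bigwedge^2(\bigwedge^2E)=\bS_{(2,1,1)}(E)$ with $\Sym^2(\bigwedge^2E)=\bS_{(2,2)}(E)\oplus\bS_{(1^4)}(E)$). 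Correspondingly, the inverse of $P^{-1}=\prod_{i<j}(1-x_ix_j)^{-1}=\sum_\eta s_{(2\eta)^\dagger}$ is $P=\prod_{i<j}(1-x_ix_j)=\sum_{\mu\in Q_{-1}}(-1)^{|\mu|/2}s_\mu$, \emph{not} $\sum_\eta(-1)^{|\eta|}s_{(2\eta)^\dagger}$: even-column partitions do not index the signed inverse. Because you used the wrong even-column expansion both for $D^{-1}$ and for $\bigwedge^s\fu^-$, the two mistakes cancel notationally and the argument appears to close, but the pivotal identity has counterexamples. Take the orthogonal case, $\lambda=(2,1,1)$, $\nu=\emptyset$: then $(\emptyset,2)\in\Gamma(\lambda,\Psi_\lambda)$ since $(2,1,1)=(e_1+e_2)+(e_1+e_3)$, and $C^\lambda_{\emptyset,2}=\dim\hom_G(\bC,\bigwedge^2\fg\otimes V_\lambda)$ equals the multiplicity of $V_{(2,1,1)}$ in $\bigwedge^2(\bigwedge^2V)=\bS_{(2,1,1)}(V)$, which is $1$; but $\sum_\eta(-1)^{|\eta|}c^{(2,1,1)}_{\emptyset,(2\eta)^\dagger}=0$ because $(2,1,1)$ is not of the form $(2\eta)^\dagger$ (its transpose $(3,1)$ has odd parts). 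So your identity reads $1=0$.

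The repair is to put $Q_{-1}$ (resp.\ $Q_1$) wherever you have even-column (resp.\ even-row) partitions: then $D^{-1}$ is multiplication by $\sum_{\mu\in Q_{-1}}(-1)^{|\mu|/2}s_\mu$, and the identity you must prove becomes $C^\lambda_{\nu,s}=\sum_{\mu\in Q_{-1},\,|\mu|=2s}c^\lambda_{\mu,\nu}$, together with the vanishing of this sum when $(\nu,s)\notin\Gamma(\lambda,\Psi_\lambda)$ — which is word for word Lemma~\ref{lem:calc-C}, so after correction your matrix inversion is a repackaging of the argument in Section~\ref{sec:proof} (there the inversion is effected by the involution $i_\bO$ together with Littlewood's identity \eqref{eqn:lwood-orth}, rather than by inverting the branching matrix). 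Even then, two gaps would remain. First, your Siegel-parabolic computation of $C^\lambda_{\nu,s}$ is unavailable in type B: $\fso_{2n+1}$ has no parabolic with abelian nilradical of weights $\Psi_\lambda$; its $\fgl_n$-grading is $\fg_{-2}\oplus\fg_{-1}\oplus\fgl_n\oplus\fg_1\oplus\fg_2$ with $\fg_{\pm1}$ of weights $\pm e_i$, so the count ``$a+b+c=s$ forces $a=s$'' must also rule out odd layers — exactly the point handled by the weight argument in the last paragraph of the paper's proof of Lemma~\ref{lem:calc-C} (roots $\pm e_i$ change $|\nu|$ by $1$ and so cannot occur), or by the stable branching results of Koike--Terada which the paper invokes. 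Second, the Cauchy--Binet treatment of the statement $\bH_\lambda=\sum_\nu d^\lambda_\nu\,{\rm char}(V_\nu)$ is only a sketch: the minors of the Toeplitz matrix of $1-t^2$ are merely signs, and converting the resulting signed Schur expansion of $\bH_\lambda$ into the claimed form still requires Littlewood's identities \eqref{eqn:lwood-orth}--\eqref{eqn:lwood-symp}, i.e.\ essentially the content of Lemma~\ref{lem:schurH} and \eqref{eqn:duality}.
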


\begin{remark} \label{rmk:minaff}
Under the restriction $\ell(\lambda) \le n-1$, Chari and Greenstein constructed modules $P(\lambda,0)^{\Gamma(\lambda, \Psi_\lambda)}$ in \cite{minaff}, and Theorem~\ref{thm:main} together with \cite[Theorem 2]{minaff} shows that its character is $\bH_\lambda$. In types B and C, Naoi shows \cite[Remark 4.7]{naoi} that these modules are the ``graded limits'' of the minimal affinizations of the corresponding simple modules $V_\lambda$ of $\fg$. A similar result is obtained for a special class of highest weights in type D in \cite{naoi-D}. In particular, the characters (considered as representations of $\fg$) of both modules are the same. So the character of the minimal affinization is also $\bH_\lambda$. In type B, this follows from work of Hernandez \cite{hernandez} (see \cite[Remark 4.7]{naoi}), and also from work of Mukhin--Young \cite[Corollary 7.6]{mukhin-young}.
\end{remark}

\section{Some identities}

Let $Q_{-1}$ be the set of partitions with the following inductive definition. The empty partition belongs to $Q_{-1}$.  A non-empty partition $\mu$ belongs to $Q_{-1}$ if and only if the number of rows in $\mu$ is one more than the number of columns, i.e., $\ell(\mu)=\mu_1+1$, and the partition obtained by deleting the first row and column of $\mu$, i.e., $(\mu_2-1, \ldots, \mu_{\ell(\mu)}-1)$, belongs to $Q_{-1}$. The first few partitions in $Q_{-1}$ are $0$, $(1,1)$, $(2,1,1)$, $(2,2,2)$. Define $Q_1 = \{\lambda \mid \lambda^\dagger \in Q_{-1}\}$. We record this definition as the following formula:
\begin{align} \label{eqn:koszul-transpose}
Q_1^\dagger = Q_{-1}.
\end{align}

The significance of these sets are the following decompositions (see \cite[I.A.7, Ex.~4,5]{macdonald}):
\begin{align} 
\bigwedge^{i}(\Sym^2(E)) &= \bigoplus_{\substack{\mu \in Q_1\\ |\mu| = 2i}} \bS_{\mu}(E),\\
\label{eqn:wedge-plethysm}
\bigwedge^{i}(\bigwedge^2(E)) &= \bigoplus_{\substack{\mu \in Q_{-1}\\ |\mu|=2i}} \bS_{\mu}(E).
\end{align}

We need two of Littlewood's identities \cite[Proposition 1.5.3]{koiketerada}:
\begin{align} \label{eqn:lwood-orth}
{\rm char}(V_\lambda^\bO) &= \sum_{\mu \in Q_{1}} (-1)^{|\mu|/2} \sum_\nu c^\lambda_{\mu, \nu} s_\nu,\\
\label{eqn:lwood-symp}
{\rm char}(V_\lambda^\Sp) &= \sum_{\mu \in Q_{-1}} (-1)^{|\mu|/2} \sum_\nu c^\lambda_{\mu, \nu} s_\nu.
\end{align}

\begin{lemma} \label{lem:calc-C}
Fix $(\nu, s) \in \Gamma(\lambda ,\Psi_\lambda)$ where $\ell(\lambda) \le \rank(\fg)$ and $s = (|\lambda|-|\nu|)/2$. Then $C^\lambda_{\nu, s} = \sum_{\mu \in Q_{-1}} c^\lambda_{\mu, \nu}$ in the orthogonal case (for the symplectic case, use $Q_1$ instead of $Q_{-1}$). 

Conversely, if this sum is nonzero, then $(\nu, s) \in \Gamma(\lambda, \Psi_\lambda)$ for $s = (|\lambda|-|\nu|)/2$.
\end{lemma}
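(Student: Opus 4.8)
\emph{Overall plan and the forward direction.} I will treat the orthogonal case; the symplectic case is identical with $Q_1$ in place of $Q_{-1}$ (and strictly easier in the converse). As a representation of $G$ the Lie algebra is $\fg \cong \bigwedge^2 V$, so \eqref{eqn:wedge-plethysm} gives $\bigwedge^s(\fg) \cong \bigoplus_{\mu \in Q_{-1},\,|\mu| = 2s} \bS_\mu(V)$. Hence $C^\lambda_{\nu,s}$ is the multiplicity of $V_\nu$ in $\big(\bigoplus_{\mu \in Q_{-1},\,|\mu|=2s}\bS_\mu(V)\big)\otimes V_\lambda$. Writing $o_\kappa = {\rm char}(V_\kappa^\bO)$ and passing to the stable (rank-independent) universal characters, valid here since $\ell(\lambda)\le\rank(\fg)$, where $\{s_\kappa\}$ and $\{o_\kappa\}$ are two bases linked by \eqref{eqn:lwood-orth}, this multiplicity is the coefficient of $o_\nu$ in the polynomial $G_{-1}^{(s)}\, o_\lambda$, with $G_{-1}^{(s)} = \sum_{\mu\in Q_{-1},\,|\mu|=2s}s_\mu$. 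So the first assertion becomes the symmetric-function statement that this coefficient equals $\sum_{\mu\in Q_{-1}}c^\lambda_{\mu,\nu} = \big\langle G_{-1}^{(s)}\, s_\nu,\ s_\lambda\big\rangle$, the Hall inner product.

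\emph{Reducing to a cancellation of Littlewood--Richardson coefficients.} Both transition matrices between $\{s_\kappa\}$ and $\{o_\kappa\}$ are finite and triangular, so the coefficient of $o_\nu$ is well defined; the change of basis inverse to \eqref{eqn:lwood-orth} is the $GL\to\bO$ branching rule $s_\kappa = \sum_\nu\big(\sum_\eta c^\kappa_{\nu,2\eta}\big)o_\nu$, which says the basis dual to $\{o_\kappa\}$ under $\langle\,,\rangle$ is $\{(\sum_\eta s_{2\eta})s_\kappa\}$, whence the coefficient of $o_\nu$ in any $X$ equals $\langle(\sum_\eta s_{2\eta})s_\nu,\,X\rangle$. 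Applying this to $X = G_{-1}^{(s)}o_\lambda$, substituting the finite Schur expansion $o_\lambda = \sum_{\alpha\in Q_1}(-1)^{|\alpha|/2}s_{\lambda/\alpha}$ coming from \eqref{eqn:lwood-orth}, and expanding $\sum_\eta s_{2\eta}$, turns the left-hand side into an alternating sum of products of Littlewood--Richardson coefficients indexed by $Q_1$, $Q_{-1}$ and the even-row partitions. The task is to show it collapses to $\sum_{\mu\in Q_{-1},\,|\mu|=2s}c^\lambda_{\mu,\nu}$. I expect this cancellation to be the main obstacle of the proof, and it is cleanest to package it through Koike--Terada's explicit rule for multiplying a Schur function into the universal orthogonal characters: the signs $(-1)^{|\alpha|/2}$ from \eqref{eqn:lwood-orth} must cancel exactly against the unsigned generating function $\sum_{\mu\in Q_{-1}}s_\mu$.

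\emph{The converse.} Suppose $c^\lambda_{\mu,\nu}\neq 0$ for some $\mu\in Q_{-1}$. Then $\nu\subseteq\lambda$ and $|\mu| = |\lambda|-|\nu| = 2s$, so, setting $d = \lambda-\nu$, we have $d_i\ge 0$, $\sum_i d_i = 2s$, and $d_i = 0$ for $i>\ell(\lambda)$. The crucial estimate is $\max_i d_i \le s$. Indeed, a Littlewood--Richardson filling of $\lambda/\nu$ of content $\mu$ is column-strict with entries in $\{1,\dots,\ell(\mu)\}$, so every column of $\lambda/\nu$ has at most $\ell(\mu)$ boxes; applying this to the transposed identity $c^\lambda_{\mu,\nu} = c^{\lambda^\dagger}_{\mu^\dagger,\nu^\dagger}$ bounds every row of $\lambda/\nu$ by $\ell(\mu^\dagger) = \mu_1$, i.e.\ $d_i\le\mu_1$. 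Since $\mu\in Q_{-1}$ has $\ell(\mu) = \mu_1+1$ nonzero rows, each of size $\ge 1$, we get $|\mu|\ge 2\mu_1$, hence $\mu_1\le |\mu|/2 = s$ and $\max_i d_i\le s$. A nonnegative integer vector $d$ supported on $\{1,\dots,\ell(\lambda)\}$ with $\sum_i d_i$ even and $\max_i d_i\le\tfrac12\sum_i d_i$ is exactly the degree sequence of a loopless multigraph on $\{1,\dots,\ell(\lambda)\}$ (an elementary greedy argument, or Hakimi's theorem), i.e.\ $d = \sum_{i<j}n_{ij}(e_i+e_j)$ with $n_{ij}\in\bZ_{\ge 0}$ and $\sum_{i<j}n_{ij} = s$. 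This says precisely $(\nu,s)\in\Gamma(\lambda,\Psi_\lambda)$. In the symplectic case $\Psi_\lambda$ also contains the vectors $2e_i$, so loops are permitted and one needs only that $\sum_i d_i$ is even, which is automatic.
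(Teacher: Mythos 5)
The forward direction of your proposal has a genuine gap: after correctly reducing $C^\lambda_{\nu,s}$ to the multiplicity of $V_\nu$ in $\bigl(\bigoplus_{\mu\in Q_{-1},\,|\mu|=2s}\bS_\mu(V)\bigr)\otimes V_\lambda$, you reformulate the claim as a cancellation identity among products of Littlewood--Richardson coefficients and then stop, writing that you ``expect this cancellation to be the main obstacle.'' That cancellation is exactly the content of the lemma, so nothing has been proved. Moreover, the route you sketch is harder than necessary and hides where the hypotheses enter: the identity ``coefficient of $o_\nu$ in $G_{-1}^{(s)}o_\lambda$ equals $\sum_{\mu}c^\lambda_{\mu,\nu}$'' is \emph{false} without the degree constraint $|\nu|=|\lambda|-2s$ (already for $\lambda=\emptyset$, $s=1$, $\nu=(1,1)$ the left side is $1$ and the right side is $0$), so any successful cancellation argument must invoke that constraint at a specific point, which your outline does not locate. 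The paper's proof avoids cancellation altogether by a degree (weight-norm) argument: writing $\bS_\mu(V)=V_\mu\oplus\bigoplus V_\alpha$ with $|\alpha|<|\mu|$ for $\ell(\mu)\le\rank(\fg)$, any $V_\nu\subseteq V_\alpha\otimes V_\lambda$ satisfies $|\nu|\ge|\lambda|-|\alpha|>|\lambda|-|\mu|$, so under $|\nu|+|\mu|=|\lambda|$ only the top constituent $V_\mu$ contributes, and the multiplicity of $V_\nu$ in $V_\mu\otimes V_\lambda$ in this extremal degree is $c^\lambda_{\mu,\nu}$ by \cite[Proposition 2.5.2]{koiketerada}; the case $\ell(\mu)>\rank(\fg)$ gives $0$ on both sides. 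A related soft spot in your write-up is the unexamined assertion that one may ``pass to universal characters'' because $\ell(\lambda)\le\rank(\fg)$: the constituents $\mu$ of $\bigwedge^s(\fg)$ can have arbitrarily many rows, so the decomposition of $\bS_\mu(V)\otimes V_\lambda$ is not automatically in the stable range, and indeed the paper derives rank-independence of $C^\lambda_{\nu,s}$ as a \emph{consequence} of this lemma rather than assuming it.

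Your converse, by contrast, is correct and genuinely different from the paper's. The paper argues via weights: $\lambda-\nu$ is a sum of $s$ weights of $\fg$, each changing $|\cdot|$ by at most $2$, so all must equal some $e_i+e_j$. You instead bound $\lambda_i-\nu_i\le\mu_1\le|\mu|/2=s$ using column-strictness of LR fillings plus $|\mu|\ge 2\mu_1$ for nonempty $\mu\in Q_{-1}$, and then realize $d=\lambda-\nu$ as the degree sequence of a loopless multigraph with $s$ edges on $\{1,\dots,\ell(\lambda)\}$. This is a clean, purely combinatorial argument that does not rely on the forward direction (the paper's converse implicitly uses the first part to know $V_\nu$ actually occurs), which is a small advantage; but it does not repair the missing forward direction.
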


\begin{proof}
In the orthogonal case, we have $\fg = V_{1,1} = \bigwedge^2(V)$. So we need to calculate the multiplicity of $V_\nu$ in $\bigwedge^s(\bigwedge^2(V)) \otimes V_\lambda$ where $s = (|\lambda| - |\nu|)/2$. By \eqref{eqn:wedge-plethysm}, we get $\bigwedge^s(\bigwedge^2(V)) = \bigoplus_{\substack{\mu \in Q_{-1}\\|\mu| = 2s}} \bS_\mu (V)$. (In the symplectic case we instead have $\fg = V_2 = \Sym^2(V)$, so all of the following statements will hold if we replace $Q_{-1}$ with $Q_1$.) We claim that the multiplicity of $V_\nu$ in $\bS_\mu(V) \otimes V_\lambda$ is the Littlewood--Richardson coefficient $c^\lambda_{\mu, \nu}$.

If $\ell(\mu) \le \rank(\fg)$, then as a representation of the orthogonal group (also in the symplectic case), $\bS_\mu(V)$ is the sum of $V_\mu$ and other $V_\alpha$ where $|\alpha| < |\mu|$ up to twisting $V_\alpha$ with a sign character (this follows from the explicit formula in \cite[Proposition 2.5.1]{koiketerada}). Also, if $V_\nu$ appears in $V_\alpha \otimes V_\lambda$, then we must have $|\nu| \ge |\lambda| - |\alpha|$ by a basic argument with weights. This implies that the multiplicity of $V_\nu$ in $\bS_\mu(V) \otimes V_\lambda$ is the same as the multiplicity of $V_\nu$ in $V_\mu \otimes V_\lambda$ under our hypothesis that $|\nu| + |\mu| = |\lambda|$. Furthermore, the multiplicity in this case is the Littlewood--Richardson coefficient $c^\lambda_{\mu, \nu}$ \cite[Proposition 2.5.2]{koiketerada}. 

If $\ell(\mu) > \rank(\fg)$, then the multiplicity of $V_\nu$ in $\bS_\mu(V) \otimes V_\lambda$ is $0$ since all $V_\alpha$ in $\bS_\mu(V)$ satisfy $|\alpha|<|\mu|$. Also, $c^\lambda_{\mu, \nu} = 0$ since $\mu \not\subseteq \lambda$. This proves the claim and the second sentence of the lemma.

Now we handle the last sentence of the lemma. So suppose that $c^\lambda_{\mu, \nu} \ne 0$ for some $\mu \in Q_{-1}$. Set $s = (|\lambda|-|\nu|)/2 = |\mu|/2$. The weights of $\bS_\mu(V) \subset \bigwedge^s(\fg)$ are linear combinations of $s$ roots of $\fg$. In particular, $\lambda$ is the sum of $\nu$ and $s$ roots $\alpha_1, \dots, \alpha_s$ of $\fg$. The possible roots of $\fg$ are $e_i \pm e_j$ and $\pm e_i$. Since $|\nu + e_i - e_j| = |\nu|$ and $|\nu \pm e_i| = |\nu|\pm 1$, the $s$ roots $\alpha_1, \dots, \alpha_s$ must all be of the form $e_i + e_j$, so $(\nu, s) \in \Gamma(\lambda, \Psi_\lambda)$.
\end{proof}

\section{Proof of main theorem} \label{sec:proof}

\begin{lemma}
Pick $\rX  \in \{\rB, \rC, \rD\}$. Fix a partition $\lambda$ with $\ell(\lambda) \le n$. Then \eqref{eqn:conj} is true for the representation $V_\lambda$ for $\rX_n$ if and only if it is true for the representation $V_\lambda$ for $\rX_m$ for any $m \ge n$. 
\end{lemma}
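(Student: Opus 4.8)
The plan is to recognize both sides of \eqref{eqn:conj} as images, under a specialization map, of a \emph{single} element of the ring of symmetric functions $\Lambda$ that does not depend on the rank, and then to argue that this specialization is injective on the relevant finite-dimensional subspace as soon as the number of variables exceeds $\ell(\lambda)$.

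First I would pass to the universal characters of Koike--Terada \cite{koiketerada}: for each partition $\mu$ there is an element $u_\mu \in \Lambda$ (the universal orthogonal character in types B, D and the universal symplectic character in type C) together with specialization ring homomorphisms $\pi_m \colon \Lambda \to R(\rX_m)$ satisfying $\pi_m(u_\mu) = {\rm char}(V_\mu)$ whenever $\ell(\mu) \le \rank \rX_m$, and these irreducible characters are linearly independent (this is why one works with the full orthogonal groups). Writing $h_k = s_{(k)}$ for the complete homogeneous symmetric function, the one-row cases of \eqref{eqn:lwood-orth} and \eqref{eqn:lwood-symp} give $u_{(k)} = h_k - h_{k-2}$ in the orthogonal case and $\sum_{0\le r\le k/2}u_{(k-2r)}=\sum_{0\le r\le k/2}h_{k-2r}$ in the symplectic case; hence the universal analogue $\tilde h_k$ of $\bh_k$ has generating function $f(t)\sum_k h_k t^k$ with $f(t)=1-t^2$ (orthogonal) or $f(t)=(1-t^2)^{-1}$ (symplectic), where in both cases $f$ is a power series with $f_0=1$. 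Since $\pi_m$ is a ring homomorphism, $\pi_m(\cH_\nu)=\bH_\nu$ for $\cH_\nu=\det(\tilde h_{\nu_i-i+j})$, and by Lemma~\ref{lem:calc-C} the coefficients $C^\lambda_{\nu,s}$ and the index set $\Gamma(\lambda,\Psi_\lambda)$ are rank-independent. Therefore the element
\[
\Xi_\lambda = \sum_{(\nu,s)\in\Gamma(\lambda,\Psi_\lambda)}(-1)^s C^\lambda_{\nu,s}\,\cH_\nu - u_\lambda \in \Lambda
\]
does not depend on the rank, and \eqref{eqn:conj} for $\rX_m$ is exactly the assertion $\pi_m(\Xi_\lambda)=0$.

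The key step is a row-length bound: I claim $\Xi_\lambda$ lies in $\Lambda_{\le\ell(\lambda)}:=\operatorname{span}\{s_\mu \mid \ell(\mu)\le\ell(\lambda)\}$. For the summand $u_\lambda$ this is immediate from \eqref{eqn:lwood-orth}/\eqref{eqn:lwood-symp}, since every $s_\nu$ occurring there has $\nu\subseteq\lambda$. For each $\cH_\nu$ I would run Cauchy--Binet on the factorization $\tilde h_{\nu_i-i+j}=\sum_{p} f_{\nu_i-i-p}\,h_{p+j}$ (using $f_{<0}=0$), which collapses the $h$-part into Schur functions by Jacobi--Trudi and yields
\[
\cH_\nu = \sum_{\ell(\mu)\le\ell(\nu)}\det\bigl(f_{\nu_i-\mu_k-i+k}\bigr)_{i,k}\; s_\mu ,
\]
so $\cH_\nu\in\Lambda_{\le\ell(\nu)}$. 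Since each $\nu$ is obtained from $\lambda$ by subtracting a nonnegative combination of the vectors $e_i+e_j$ with $i,j\le\ell(\lambda)$, we have $\ell(\nu)\le\ell(\lambda)$, and hence $\Xi_\lambda\in\Lambda_{\le\ell(\lambda)}$. By the unitriangularity of $\{u_\mu\}$ against $\{s_\mu\}$ coming from the Littlewood identities, $\Lambda_{\le\ell(\lambda)}$ is also spanned by $\{u_\mu\mid\ell(\mu)\le\ell(\lambda)\}$, so I may write $\Xi_\lambda=\sum_{\ell(\mu)\le\ell(\lambda)}a_\mu u_\mu$ with rank-independent $a_\mu$.

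I would then conclude by injectivity. For every $m\ge n$ the rank of $\rX_m$ is at least $n\ge\ell(\lambda)\ge\ell(\mu)$ for each $\mu$ occurring in $\Xi_\lambda$, so no modification rules intervene and $\pi_m(\Xi_\lambda)=\sum_{\ell(\mu)\le\ell(\lambda)}a_\mu\,{\rm char}(V_\mu)$; by linear independence of these irreducible characters, $\pi_m(\Xi_\lambda)=0$ if and only if all $a_\mu=0$, i.e.\ if and only if $\Xi_\lambda=0$. Thus for every $m\ge n\ge\ell(\lambda)$ the condition $\pi_m(\Xi_\lambda)=0$ is equivalent to the single rank-free identity $\Xi_\lambda=0$, and in particular it holds for $\rX_n$ if and only if it holds for $\rX_m$. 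The main obstacle is precisely this row-length bound on $\cH_\nu$: one must ensure the Jacobi--Trudi determinant produces no universal character $u_\mu$ with $\ell(\mu)>\ell(\lambda)$, since otherwise the modification rules for $\pi_m$ could vary with $m$ and wreck the rank-independence; the Cauchy--Binet expansion above is what excludes this.
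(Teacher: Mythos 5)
Your proof is correct, but it takes a genuinely different technical route from the paper's. The paper disposes of this lemma in a few lines by citing \cite[Corollary 2.5.3]{koiketerada}: the decomposition of a tensor product $V_\lambda \otimes V_\mu$ into irreducibles is independent of the rank once the rank is at least $\ell(\lambda)+\ell(\mu)$, and since $\bH_\nu$ is built from at most $\ell(\nu)\le\ell(\lambda)\le n$ one-row characters, its irreducible multiplicities (and, by Lemma~\ref{lem:calc-C}, the data $\Gamma(\lambda,\Psi_\lambda)$ and $C^\lambda_{\nu,s}$) are rank-independent, which gives the equivalence. You instead lift both sides of \eqref{eqn:conj} to the universal character ring of Koike--Terada, prove directly via the one-row cases of \eqref{eqn:lwood-orth}/\eqref{eqn:lwood-symp} and a Cauchy--Binet expansion that the lifted difference $\Xi_\lambda$ lies in the span of universal characters $u_\mu$ with $\ell(\mu)\le\ell(\lambda)$, and then invoke injectivity of the specialization on that subspace (no modification rules, linear independence of irreducible characters). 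Both arguments rest on the same stability philosophy, but yours replaces the citation with an explicit, self-contained computation --- the row-length bound on $\det(\tilde h_{\nu_i-i+j})$ is exactly the point that the cited corollary handles implicitly --- at the cost of introducing the universal character ring one step earlier than the paper does (the paper only passes to the ``$n=\infty$'' picture after this lemma). A minor presentational point: it is worth stating explicitly that the $\nu$ indexing $\Gamma(\lambda,\Psi_\lambda)$ are required to be partitions (dominant weights), since that is what guarantees $\ell(\nu)\le\ell(\lambda)$ when you subtract the vectors $e_i+e_j$.
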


\begin{proof}
By \cite[Corollary 2.5.3]{koiketerada}, the tensor product decomposition $V_\lambda \otimes V_\mu$ is independent of $m$ if $m \ge \ell(\lambda) + \ell(\mu)$, and in this case, the tensor product decomposes as a sum of $V_\alpha$ with $\ell(\alpha) \le \ell(\lambda) + \ell(\mu)$. The definition of $\bH_\lambda$ involves multiplying at most $\ell(\lambda) \le m$ characters, all indexed by one-row partitions, so its definition is independent of $m$. Certainly the set $\Gamma(\lambda, \Psi_\lambda)$ does not depend on $m$ if $m \ge \ell(\lambda)$. So it remains to show that the coefficients $C^\lambda_{\nu, s}$ are independent of $m$, but this follows from Lemma~\ref{lem:calc-C}.
\end{proof}

In particular, we may assume that $n = \infty$. In this limit, we can use some additional symmetries of the character ring $\Lambda$ of $\fg$. Then $\Lambda$ is the ring of symmetric functions, but is equipped with a new basis which was studied in \cite{koiketerada}. Write $s_{[\lambda]} = {\rm char}(V_\lambda)$. We use $s^\Sp_{[\lambda]}$ or $s^\bO_{[\lambda]}$ if we need to emphasize the group. Then the $s_{[\lambda]}$, as $\lambda$ ranges over all partitions, forms a basis for this character ring. The idea is to use \eqref{eqn:lwood-orth} or \eqref{eqn:lwood-symp} to exhibit the change of basis between $s_{[\lambda]}$ and the usual Schur functions $s_\mu = {\rm char}(\bS_\mu(V))$. There is an involution (which is an algebra automorphism), denoted $i_\bO$ in the orthogonal case and $i_{\Sp}$ in the symplectic case, that sends $s_{[\lambda]}$ to $s_{[\lambda^\dagger]}$ \cite[Theorem 2.3.4]{koiketerada}. Also, we recall that the linear map $\omega \colon s_\lambda \mapsto s_{\lambda^\dagger}$ is an algebra automorphism \cite[\S I.3]{macdonald}. We need the following identity \cite[Theorem 2.3.2]{koiketerada}
\begin{align} \label{eqn:duality}
\omega(s^\Sp_{[\lambda]}) = s^\bO_{[\lambda^\dagger]}.
\end{align}

\begin{lemma} \label{lem:schurH}
The involution $i_{\bO}$ or $i_{\Sp}$ sends $\bH_\nu$ to the Schur function $s_{\nu^\dagger}$.
\end{lemma}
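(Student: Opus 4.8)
The plan is to reduce the statement to the ordinary dual Jacobi--Trudi identity by computing the image of a single entry $\bh_k$ under the involution $i$ (standing for $i_\bO$ or $i_\Sp$). Since $i$ is an algebra automorphism and a determinant is an integer polynomial in its entries, $i$ commutes with the formation of the determinant, so
\[
i(\bH_\nu) = \det\bigl(i(\bh_{\nu_i - i + j})\bigr).
\]
Thus it would suffice to establish the single identity $i(\bh_k) = e_k$, the $k$-th elementary symmetric function. Granting this, the result follows from the N\"agelsbach--Kostka identity $s_{\nu^\dagger} = \det(e_{\nu_i - i + j})_{1\le i,j \le \ell(\nu)}$, which is obtained by applying $\omega$ (recall $\omega(h_k)=e_k$ and $\omega(s_\nu)=s_{\nu^\dagger}$) to the usual expansion $s_\nu = \det(h_{\nu_i - i + j})$. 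One first checks that the boundary conventions agree, namely $i(\bh_0) = 1 = e_0$ and $i(\bh_k) = 0 = e_k$ for $k < 0$.

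For the orthogonal case I would argue directly: by definition $\bh_k = s^\bO_{[(k)]}$, so $i_\bO(\bh_k) = s^\bO_{[(1^k)]}$. Expanding the right-hand side through Littlewood's identity \eqref{eqn:lwood-orth}, the only partition of $Q_1$ contained in the single column $(1^k)$ is the empty one---since a single column $(1^m)$ lies in $Q_1$ exactly when its transpose $(m)$ lies in $Q_{-1}$, which by the inductive definition forces $m = 0$---so $s^\bO_{[(1^k)]} = s_{(1^k)} = e_k$.

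The symplectic case needs one extra step. Here $\bh_k = \sum_{0 \le r \le k/2} s^\Sp_{[(k-2r)]}$, hence $i_\Sp(\bh_k) = \sum_{0 \le r \le k/2} s^\Sp_{[(1^{k-2r})]}$. Expanding $s^\Sp_{[(1^m)]}$ via \eqref{eqn:lwood-symp}, the only members of $Q_{-1}$ contained in $(1^m)$ are $\varnothing$ and $(1,1)$, which yields $s^\Sp_{[(1^m)]} = e_m - e_{m-2}$; summing over $m = k, k-2, k-4, \dots$ then telescopes to $e_k$.

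I expect the only real obstacle to be the bookkeeping with $Q_1$ and $Q_{-1}$: the whole argument hinges on correctly enumerating which partitions in these sets fit inside a single row or single column. Once that small combinatorial check is done, the orthogonal collapse and the symplectic telescoping are both automatic, and the dual Jacobi--Trudi identity closes the proof.
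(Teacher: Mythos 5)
Your proof is correct and follows essentially the same route as the paper: reduce to showing $i(\bh_k)=e_k=s_{1^k}$ using that the involution is an algebra automorphism, then invoke the dual Jacobi--Trudi identity $s_{\nu^\dagger}=\det(e_{\nu_i-i+j})$. The only (harmless) difference is that you verify $s^\bO_{[1^k]}=e_k$ and $s^\Sp_{[1^m]}=e_m-e_{m-2}$ by expanding via Littlewood's identities \eqref{eqn:lwood-orth}--\eqref{eqn:lwood-symp} and enumerating the column-shaped elements of $Q_{\pm 1}$, whereas the paper simply cites the decomposition of $\bigwedge^k V$ under the orthogonal and symplectic groups; both justifications are valid.
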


\begin{proof}
In the orthogonal case, $i_{\bO}(\bh_k) = s_{[1^k]} = {\rm char}(\bigwedge^k V) = s_{1^k}$, and in the symplectic case, $i_{\Sp}(\bh_k) = \sum_{0 \le r \le k/2} s_{[1^{k-2r}]} = {\rm char}(\bigwedge^k V) = s_{1^k}$ by basic properties of the decomposition of exterior powers under the action of the symplectic group. Since $i_{\bO}$ and $i_{\Sp}$ are algebra homomorphisms, we see that $\bH_\nu = \det(\bh_{\nu_i - i +j})$ gets sent to $\det(s_{1^{\nu_i - i + j}})$, which is the Schur function $s_{\nu^\dagger}$ by the Jacobi--Trudi formula \cite[\S I.3, eqn. (3.5)]{macdonald}.
\end{proof}

Now we focus on the orthogonal case (the symplectic case is almost identical). By \eqref{eqn:lwood-orth}, 
\[
s_{[\lambda]} = \sum_{\mu \in Q_1} (-1)^{|\mu|/2} \sum_\nu c^\lambda_{\mu, \nu} s_\nu.
\]
Since $c^\lambda_{\mu, \nu} = c^{\lambda^\dagger}_{\mu^\dagger, \nu^\dagger}$ (use that $s_\mu s_\nu = \sum_\lambda c^\lambda_{\mu,\nu} s_\lambda$ \cite[\S I.9]{macdonald} and the involution $\omega$ defined above), and $Q_1^\dagger = Q_{-1}$ \eqref{eqn:koszul-transpose}, we can rewrite this as
\[
s_{[\lambda^\dagger]} = \sum_{\mu \in Q_{-1}} (-1)^{|\mu|/2} \sum_\nu c^\lambda_{\mu, \nu} s_{\nu^\dagger}.
\]
In particular, the coefficient of $s_{\nu^\dagger}$ is $\sum_{\mu \in Q_{-1}} (-1)^{(|\lambda| - |\nu|)/2} c^\lambda_{\mu, \nu}$. By Lemma~\ref{lem:calc-C}, we get 
\begin{align*} \label{eqn:newconj}
s_{[\lambda^\dagger]} = \sum_{(\nu, s) \in \Gamma(\lambda, \Psi_\lambda)} (-1)^s C^\lambda_{\nu, s} s_{\nu^\dagger}.
\end{align*}
Finally, apply the involution $i_\bO$ to this equation and use Lemma~\ref{lem:schurH} to get \eqref{eqn:conj}. The last part of the theorem follows directly from Lemma~\ref{lem:schurH} and \eqref{eqn:duality}.

\end{document}